\newcommand{\id}{\text{id}}
\newcommand{\ini}{\text{in}}
\newcommand{\rank}{\textrm{Rank}}
\newcommand{\Gr}{{\rm Gr}}
\newcommand{\init}{{\rm in}}
\def\frk{\mathfrak}               
\def\a{{\bf a}}
\def\Phi{{\frk n}}
\def\Phi{{\frk N}}
\def\Ma{M_{\bf a}}
\def\wa{{w}_{\mathbf a}}
\def\opn#1#2{\def#1{\operatorname{#2}}} 
\opn\chara{char} \opn\length{\ell} \opn\pd{pd} \opn\rk{rk}
\opn\projdim{proj\,dim} \opn\injdim{inj\,dim} \opn\rank{rank}
\opn\depth{depth} \opn\grade{grade} \opn\height{height}
\opn\embdim{emb\,dim} \opn\codim{codim}
\opn\Tr{Tr} \opn\bigrank{big\,rank}
\opn\superheight{superheight}\opn\lcm{lcm}
\opn\trdeg{tr\,deg}
\opn\reg{reg} \opn\lreg{lreg} \opn\ini{in} \opn\lpd{lpd}
\opn\size{size} \opn\sdepth{sdepth}
\opn\link{link}\opn\fdepth{fdepth}\opn\lex{lex}
\opn\LM{LM}
\opn\LC{LC}
\opn\NF{NF}
\opn\Merge{Merge}
\opn\sgn{sgn}
\opn\div{div} \opn\Div{Div} \opn\cl{cl} \opn\Pic{Pic}
\opn\Prin{Prin}
\opn\op{op}
\opn\indeg{indeg} \opn\outdeg{outdeg}
\opn\red{red}
\opn\Spec{Spec} \opn\Supp{Supp} \opn\supp{supp} \opn\Sing{Sing}
\opn\Ass{Ass} \opn\Min{Min}\opn\Mon{Mon} \opn\val{val}
\opn\Ann{Ann} \opn\Rad{Rad} \opn\Soc{Soc}
 \opn\Ker{Ker} \opn\Coker{Coker} \opn\Am{Am}
\opn\Hom{Hom} \opn\Tor{Tor} \opn\Ext{Ext} \opn\End{End}
\opn\Aut{Aut} \opn\id{id}
\opn\nat{nat}
\opn\pff{pf}
\opn\Pf{Pf} \opn\GL{GL} \opn\SL{SL} \opn\mod{mod} \opn\ord{ord}
\opn\Gin{Gin} \opn\Hilb{Hilb}\opn\sort{sort}
\opn\Image{Image}
\opn\vol{Vol}
\opn\aff{aff} \opn\con{conv} \opn\relint{relint} \opn\st{st}
\opn\lk{lk} \opn\cn{cn} \opn\core{core} \opn\vol{vol}
\opn\link{link} \opn\star{star}\opn\lex{lex}\opn\set{set}
\opn\dist{dist}
\opn\gr{gr}
\begin{document}
\title{Gröbner Degenerations of Determinantal Ideals with an Application to Toric Degenerations of Grassmannians}
\titlerunning{Gröbner Degenerations of Determinantal Ideals}
%
\author{Fatemeh Mohammadi \inst{1}\orcidID{0000-0001-5187-0995}
\authorrunning{F. Mohammadi}
%
\institute{Department of Mathematics and Computer Science, KU Leuven, Belgium
\email{fatemeh.mohammadi@kuleuven.be}}
}
\maketitle              
\begin{abstract}
The concept of the Gröbner fan for a polynomial ideal, introduced by Mora and Robbiano in 1988, provides a robust polyhedral framework where maximal cones correspond to the reduced Gröbner bases of the ideal. Within this geometric structure resides the tropical variety, a subcomplex of the Gröbner fan, utilized across various mathematical domains. Despite its significance, the computational complexity associated with tropical varieties often limits practical computations to smaller instances.
In this note, we revisit a family of monomial ideals, called matching field ideals, from the context of Gr\"obner degenerations. We show that they can be obtained as weighted initial ideals of determinantal ideals. We explore the algebraic properties of these ideals, 
with a particular emphasis on minimal free resolutions and Betti numbers. 
\keywords{Gröbner degeneration  \and Determinantal ideals \and Grassmannians \and Betti numbers \and Cellular resolution.}
\end{abstract}

\section{Introduction}
A Gr\"obner degeneration refers to a process in algebraic geometry where a given ideal or variety is systematically deformed to more well-behaved ideals such as monomial or toric ideals. It produces a parametric family of ideals, preserving the geometric and algebraic properties of the original ideal.
Overall, Gr\"obner degenerations provide a powerful tool for understanding the invariants of ideals and varieties under perturbations or deformations.


Monomial degenerations play a crucial role in simplifying the structure and analysis of ideals such as their minimal free resolutions~\cite{dochtermann2012cellular,HT}. For example, the semi-continuity theorem provides a bound on the Betti numbers of any ideal in terms of those of their monomial initial ideals. In this note, we will study a family of monomial degenerations of determinantal ideals~\cite{bernstein1993combinatorics,conca2015universal,sturmfels1993maximal}. 


Toric varieties are another family of popular objects in algebraic geometry; see e.g.~\cite{Cox2,Ful93}. This is mainly because there is a dictionary between their geometric properties and the combinatorial features of their polytopes. This dictionary can be extended from toric varieties to arbitrary varieties through toric degenerations. The connection between toric varieties and Newton polytopes was first developed by Khovanskii in \cite{Hov78}. 

A toric degeneration \cite{anderson2013okounkov} of an algebraic variety $X$ is a flat family over the affine line $\mathbb{A}^1$, where the fiber over zero is a toric variety and all the other fibers are isomorphic to $X$. 
Toric degenerations serve as useful tools for analyzing algebraic varieties, providing a means to comprehend a general variety through the geometry of its associated toric counterparts, as many geometric invariants (e.g. degree, dimension) remain invariant under such transformations.
Hence, computing toric degenerations of general varieties allows us to use the robust machinery of toric varieties to understand the invariants of the original varieties.
The primary focus when computing toric degenerations revolve around two pivotal questions: (1) How do we construct them? and (2) How are distinct toric degenerations of the same variety interconnected? Given the paramount importance of these questions, a plethora of methodologies have emerged across diverse fields such as algebraic geometry (e.g. \cite{anderson2013okounkov,EH-NObodies,GHKK}), representation theory \cite{FFL11Symplectic}, cluster algebras \cite{RW17,bossinger2021families}, tropical geometry \cite{bossinger2017computing,clarke-schubert,clarke2021standard,kaveh2019khovanskii,KristinFatemeh}, and combinatorics \cite{clark-Higashitani-Mohammadi,clarke2024combinatorial,deformationsflag}.
However, despite the progress made in these domains, the challenge remains: there are currently no known algorithms tailored for computing toric degenerations of a given variety or for facilitating comparisons between disparate cases from different theoretical frameworks.

\smallskip

In this note, we study a specific family of Gr\"obner degenerations, focusing on determinantal ideals and Grassmannian varieties Gr$(3,n)$. We demonstrate that such degenerations are intricately tied to weight vectors and manifest either as monomial initial ideals or maximal cones within the  Gr\"obner  fan of the determinantal ideal. Moreover, we introduce a monomial map, where the kernel materializes as a binomial prime ideal, thereby signifying a toric degeneration of the Pl\"ucker ideal of Gr$(3,n)$.
More specifically, we identify  matching field ideals as weighted monomial initial ideals of determinantal ideals, showcasing their linear quotient property. Further, we explicitly compute their Betti numbers and establish that their minimal free resolution is supported on a CW-complex, or equivalently, demonstrating their cellular resolution.


\section{Gr\"obner degeneration of determinantal ideals} 
Let $X=(x_{ij})$ represent a generic $d\times n$ matrix, and let $I_X$ denote the ideal generated by the maximal minors of $X$. For simplicity, we focus on the case where $d=3$ and we denote the variables corresponding to the first, second, and third rows of $X$ as $x$, $y$, and $z$, respectively.

\medskip

We recall the notion of block diagonal matching fields from \cite{KristinFatemeh}. See also \cite{sturmfels1993maximal,bernstein1993combinatorics}.
Consider a sequence $a=(a_1, a_2, \dots, a_r)$ of positive numbers $a_1, a_2, \dots, a_r$ such that $\sum_{i = 1}^r a_i = n$. For $1 \leq t \leq r$, define $I_t = \{\alpha_{t-1}+1, \alpha_{t-1} +2, \dots, \alpha_{t}\}$, where $\alpha_{t} = \sum_{i = 1}^{t} a_{i}$ and $\alpha_0 = 0$. We construct the ideal $M_{\bf{a}}$ as follows: For every $3$-subset $\{i<j<k\}$ of $[n]$, we let $s$ be the minimal $t$ such that $I_t \cap \{i,j,k\} \neq \emptyset$. Then we associate the monomial
\[
m_{\{ijk\}} =
\left\{
	\begin{array}{ll}
		x_jy_iz_k & \mbox{if } |\{i,j,k\}\cap I_s|=1 \\
		x_iy_jz_k & \mbox{otherwise}
	\end{array}
\right.
\]
The ideal $M_{\bf{a}}$ is generated by the monomials associated to all $3$-subsets of $[n]$. 

\begin{definition}\label{weight}
    To every $r$-block diagonal matching field of type $(a_1,\ldots,a_r)$, we associate a weight order $\prec_{\wa}$ as follows.
Let us denote $\wa(x_i), \wa(y_j),\wa(z_k)$ for the weights associated to $x_i,y_j,z_k$ for all $i,j,k$. Then 
we associate the following weights to the variables:
\begin{itemize}
    \item $
\wa(x_i) = w_0  \quad \text{for all $i$}$
\item $
\wa(y_j) = \begin{cases}
\wa(y_{\alpha_{s-1}+ 1}) + j - (\alpha_{s-1} + 1), \quad &\text{if $j \in I_s$ and $j > \alpha_{s-1}+1$ }\\
\wa(y_{\alpha_{s+1}})+1,  \quad &\text{if $j \in I_s$, $j=\alpha_{s-1} + 1$ and $s < r$}\\ 
w_0 + 1  \quad &\text{if $j=\alpha_{r-1} + 1$.}
\end{cases}
$
\item
$\wa(z_1)= \wa(z_2) = w_0\quad\text{and}\ \wa(z_i)= \wa(y_{\alpha_1}) + (n-2)(i-2) \quad \text{for all  $3 \leq i\leq n$.}
$
\end{itemize}
We let $\prec_{\text w}$ be the monomial order such that for two monomials $m,m'$ we have that $m<m'$ if the associated weight of $m$ is less than that of $m'$ or they have the same weight and $m<m'$ with respect to the revlex order induced by: 
\[\footnotesize{
z_n> 
\cdots>z_3 >\underbrace{y_{\alpha_1}>\cdots>y_{\alpha_0 +1 }}_\text{coming from $I_1$}
>\cdots>\underbrace{y_{\alpha_r}>\cdots>y_{\alpha_{r-1}+ 1}} _\text{coming from $I_r$}>z_2>z_1>x_1> \cdots>x_n.
}\]
\end{definition}

\begin{example}
Let $n = 7, {\bf a}=[12|345|67], w_0 = 1$. The order on the variables is 
\[
z_7>z_6>\cdots> z_3>y_2>y_1>y_5>y_4>y_3>y_7>y_6>z_2>z_1>x_1>\cdots>x_7
\]
The weight on the variables (represented in a matrix) 
is given as:
\[
w_{\bf a}=\begin{bmatrix}
    1 & 1 & 1 &1 &1 &1 &1\\
    7 & 8 & 4 &5 &6 &2 &3 \\
    1 & 1 & 13 &18 &23 &28 &33 \\
\end{bmatrix}.
\]
This induces $\prec_{\wa}$ weight order and $M_{\bf a} = \init_{\prec_{\wa}}(I)$.
\end{example}

\begin{theorem}\label{thm:main}
$M_{\a}$ is the initial ideal of the determinantal ideal $I_X$ w.r.t. $\prec_{\wa}$. In particular, the maximal minors form a Gr\"obner basis for $I_X$ w.r.t. 
$\prec_{\wa}$.
\end{theorem}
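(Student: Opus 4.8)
The plan is to observe that the whole statement is equivalent to the single assertion $M_{\a}=\init_{\prec_{\wa}}(I_X)$: once this is established, the fact that the maximal minors form a Gröbner basis is immediate, since the generators of $I_X$ are exactly these minors and, by definition of a Gröbner basis, they form one precisely when their initial terms generate $\init_{\prec_{\wa}}(I_X)$. I would therefore split the argument into two parts: (i) show that for every $3$-subset $\{i<j<k\}\subseteq[n]$ the $\prec_{\wa}$-initial term of the maximal minor $P_{ijk}:=\det(X_{\{i,j,k\}})$ is the matching-field monomial $m_{\{ijk\}}$, which gives the inclusion $M_{\a}\subseteq\init_{\prec_{\wa}}(I_X)$; and (ii) upgrade this inclusion to an equality.

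For (i), the crucial simplification is that $\wa(x_i)=w_0$ for all $i$, so the $\wa$-weight of any term $\pm x_ay_bz_c$ of the $3\times3$ determinant $P_{ijk}$ equals $w_0+\wa(y_b)+\wa(z_c)$. Hence the contest among the six terms reduces to comparing the six sums $\wa(y_b)+\wa(z_c)$ over the ordered pairs $(b,c)$ of distinct elements of $\{i,j,k\}$, with ties resolved by the revlex order fixed in Definition~\ref{weight}. I would run this comparison according to the position of $\{i,j,k\}$ relative to the first block $I_s$ it meets, exactly matching the two cases in the definition of $m_{\{ijk\}}$. Since $k\ge3$ always, $\wa(z_k)=\wa(y_{\alpha_1})+(n-2)(k-2)$ is the largest of the relevant $z$-weights, and the step $n-2$ is chosen so that the $\prec_{\wa}$-maximal term is forced to carry the variable $z$ on the last column $k$; among the two surviving candidates $x_jy_iz_k$ and $x_iy_jz_k$ the piecewise-defined $y$-weights — consecutive but reversed inside each block, with the extra $+1$ at block boundaries — then select $m_{\{ijk\}}$, the revlex tie-break being invoked only in the boundary configurations where two $y$-weights combine with the small $z$-weights $w_0$ to give equal totals.

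For (ii) I would compare Hilbert functions. Since passing to an initial ideal preserves the Hilbert function, $S/\init_{\prec_{\wa}}(I_X)$ has dimension $\dim S/I_X=2n+2$ and multiplicity $e(S/I_X)=\binom{n}{2}$, these being the classical codimension $n-2$ and degree $\binom{n}{2}$ of the variety of $3\times n$ matrices of rank at most $2$. On the other hand $M_{\a}$ is squarefree, hence radical, so $S/M_{\a}$ has no embedded primes; and a direct description of the facets of the Stanley–Reisner complex of $M_{\a}$ — which for $r=1$ reduces to, and in general runs parallel to, the classical analysis of the diagonal initial ideal of $I_X$ — shows that this complex is pure of dimension $2n+1$ with exactly $\binom{n}{2}$ facets, so $\dim S/M_{\a}=2n+2$ and $e(S/M_{\a})=\binom{n}{2}$ as well. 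Plugging these equalities into the short exact sequence $0\to\init_{\prec_{\wa}}(I_X)/M_{\a}\to S/M_{\a}\to S/\init_{\prec_{\wa}}(I_X)\to0$ forces the quotient $\init_{\prec_{\wa}}(I_X)/M_{\a}$ to be supported in dimension strictly less than $2n+2$; as a nonzero submodule of $S/M_{\a}$ it would have to be supported in dimension $2n+2$, so it vanishes and $M_{\a}=\init_{\prec_{\wa}}(I_X)$. (Alternatively, step (ii) could be obtained by verifying Buchberger's criterion directly on the $S$-pairs of the maximal minors, but that is a heavier computation.)

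I expect step (i) to be the main obstacle: the weights $\wa(y_j)$ and $\wa(z_i)$ are calibrated very tightly — in the decisive comparisons the largest and second-largest totals frequently differ by a single unit — so one must run the case analysis over all configurations of $\{i,j,k\}$ relative to the block decomposition and verify meticulously that the $+1$ jumps in the $y$-weights and the $(n-2)$-steps in the $z$-weights make $m_{\{ijk\}}$, and no competitor, strictly maximal; the configurations that genuinely require the revlex refinement are the most delicate. Step (ii) is comparatively routine once the combinatorics of the block-diagonal Stanley–Reisner complex is understood.
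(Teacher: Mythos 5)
Your step (i) is exactly the paper's argument: since $\wa(x_i)=w_0$ for all $i$, one compares the six terms of each $3\times 3$ minor via $\wa(y_b)+\wa(z_c)$, uses the $(n-2)$-steps in the $z$-weights to force $z_k$ into the leading term, and then the block structure of the $y$-weights to select $m_{\{ijk\}}$; the paper carries out this case analysis in full (split on $|\{i,j,k\}\cap I_s|$ and on whether $\sigma(k)=k$), and in fact obtains strict weight inequalities in every case, so the revlex tie-break you anticipate needing is never invoked. You have only sketched this part, but the plan is the right one and the verification is mechanical.

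Step (ii) is where you diverge, and where the real issue lies. The paper upgrades $M_{\a}\subseteq\init_{\prec_{\wa}}(I_X)$ to equality in one line by citing Conca--De Negri--Gorla (\cite{conca2015universal}, Theorem 1.1): the maximal minors of a generic matrix form a \emph{universal} Gr\"obner basis of $I_X$, so their initial terms generate $\init_{\prec_{\wa}}(I_X)$ for any term order whatsoever. Your alternative via dimension and multiplicity is logically sound in outline (the exact-sequence argument does work, and you correctly note that it needs $S/M_{\a}$ to be unmixed so that any nonzero submodule has top dimension), but it hinges on the assertion that the Stanley--Reisner complex of $M_{\a}$ is pure of dimension $2n+1$ with exactly $\binom{n}{2}$ facets. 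For a general block decomposition $\a=(a_1,\dots,a_r)$ this is a substantive combinatorial statement, not a routine one: it is essentially equivalent to showing that the matching field monomials generate an ideal with the same Hilbert polynomial data as $I_X$, which is most of what the theorem is asserting in the first place. As written, your proof of (ii) therefore has a genuine gap --- you would need to exhibit and count the facets of this complex for arbitrary $\a$ (only the $r=1$ diagonal case is classical), whereas the citation to the universal Gr\"obner basis result makes all of this unnecessary.
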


\begin{proof}
We first show that 
$M_\a \subseteq \init_{\prec_{\wa}}(I_X)$.
For every $3$-subset $\{i<j<k\}$ of $[n]$, let $s$ be the minimal $t$ such that $I_t \cap \{i,j,k\} \neq \emptyset$. Consider the minor  $[ijk]$ of $X$ on the columns $i,j,k$. Note that:
\[
[ijk] = \sum_{\sigma \in {\rm Sym}(i,j,k)} {\rm sign}(\sigma)x_{\sigma(i)}y_{\sigma(j)}z_{\sigma(k)} \ \text{and}\  \wa(x_iy_jz_k) = \wa(x_i) + \wa(y_j) + \wa(z_k).
\]
To show that $m_{\{ijk\}}$ is the initial term of $[ijk]$ w.r.t. $\prec_{\wa}$, we consider two cases:

\textbf{Case 1.} Let $|\{i,j,k\}\cap I_s| > 1$. 
In this case $m_{\{ijk\}} = x_iy_jz_k$. It is enough to show that $\wa(x_iy_jz_k) - \wa(x_{\sigma(i)}y_{\sigma(j)}z_{\sigma(k)}) > 0$ for $\sigma \neq {\rm id}$. If $\sigma(k) \neq k$, we know that $k > \sigma(k)$. Then 
\[
\wa(x_iy_jz_k) - \wa(x_{\sigma(i)}y_{\sigma(j)}z_{\sigma(k)}) \geq (n-2)(k - \sigma(k)) + \wa(y_j) - \wa(y_{\sigma(j)}).
\]
It follows by definition of $\prec_{\wa}$ that $ |\wa(y_j) - \wa(y_{\sigma(j)})|  < n-2 $  and $\wa(x_iy_jz_k) - \wa(x_{\sigma(i)}y_{\sigma(j)}z_{\sigma(k)}) > 0 $.
If $\sigma(k) = k$, then 
$\sigma(j) = i$ as $\sigma \neq {\rm id}$.
Hence, 
\[
\wa(x_iy_jz_k) - \wa(x_{\sigma(i)}y_{\sigma(j)}z_{\sigma(k)}) =  \wa(y_j) - \wa(y_{\sigma(j)}) = \wa(y_j) - \wa(y_i).
\]
As $|\{i,j,k\}\cap I_s| > 1$, we have
$\{i,j\}\subset I_s$ and so 
$\wa(y_j) - \wa(y_{i}) =  j - i > 0$.

\medskip
\textbf{Case 2.} Let $|\{i,j,k\}\cap I_s| = 1$. Then $m_{\{ijk\}} = x_jy_iz_k$. 
%
If $\sigma(k) \neq k$, then:
\[
\wa(x_jy_iz_k) - \wa(x_{\sigma(j)}y_{\sigma(i)}z_{\sigma(k)}) \geq (n-2)(k - \sigma(k)) + \wa(y_i) - \wa(y_{\sigma(i)}).
\]  
We know that $k > \sigma(k)$, hence 
$|\wa(y_i) - \wa(y_{\sigma(i)})|< n-2 $. (Here $|.|$ denote the absolute value.) Therefore, $\wa(x_jy_iz_k) - \wa(x_{\sigma(j)}y_{\sigma(i)}z_{\sigma(k)}) > 0 $.
If $\sigma(k) = k$, then $\sigma(i) = j$ as $\sigma \neq \rm id$. 
Hence,
\[\wa(x_jy_iz_k) - \wa(x_{\sigma(j)}y_{\sigma(i)}z_{\sigma(k)}) =  \wa(y_i) - \wa(y_{\sigma(i)}) = \wa(y_i) - \wa(y_j).
\]
As $|\{i,j,k\}\cap I_s| = 1$, we have that $i \in  I_s $ and $j \in I_t$ for some $t>s$ and it follows from the definition of $\prec_{\wa}$ that $\wa(y_i) - \wa(y_j) > 0$, which completes the proof. 

By \cite[Theorem 1.1]{conca2015universal}, 
the maximal minors of $X$ form a universal Gr\"obner basis for $I_X,$ 
hence $M_\a = \init_{\prec_{\wa}}(I_X)$. This also implies the second claim. 
\end{proof}

\subsection{Connection to toric degenerations of Grassmannians}
The Grassmannian $\Gr(3,n)$ is the space of $3$-dimensional linear subspaces of $\mathbb{C}^n$, which can be embedded into a projective spaces, using Pl\"ucker embedding as follows. 
On the level of rings, consider the polynomial ring $S$ on the Pl\"ucker variables $p_I$, corresponding to the maximal minors of $X$ on the columns indexed by $I=\{i,j,k\}$, and the polynomial ring $R$ on the variables $x_i,y_j,z_k$ of the matrix $X$. The Pl\"ucker embedding is obtained by the map: 
\[
  \phi: 
  S
  \to R \quad\text{such that}\quad
  p_I  \mapsto \det(X_I).
\]
A good candidate for a toric degeneration of Gr$(k,n)$ is given by deforming $\phi$ to a monomial map. This is done by sending each \emph{Pl\"ucker variable} $p_I$ to one of the summands of $\det(X_I)$.
In particular, the monomial Gr\"obner degeneration of the determinantal ideal $I_X$ provides such a degeneration.  
More precisely, given a matching field $\bf a$, we define the map $\phi$ such that $\phi(p_I)=m_I$. In \cite{KristinFatemeh}, it is established that the kernel of this map forms a binomial and prime ideal, offering a toric degeneration of Gr$(3,n)$. Furthermore, Theorem~\ref{thm:main} further demonstrates that the matching ideal itself can be derived as a weighted Gr\"obner degeneration, and varying the weights $w_{\a}$, we can move through Gr\"obner fan and generate further toric degenerations, as the kernel of the corresponding monomial map.

\smallskip
For general Grassmannians and flag varieties, there are prototypic examples of toric degenerations which are related to young tableaux, Gelfand-Tsetlin integrable systems, and their polytopes. In the case of the Grassmannian Gr$(2,n)$, there are many other toric degenerations generalizing this primary example. Namely, any trivalent tree with $n$ number of labeled leaves gives rise to a toric degeneration of Gr$(2,n)$. The toric variety is governed by the isomorphism type of the trivalent tree \cite{speyer2004tropical}. In particular, {\rm Trop~Gr}$(2,n)$ forms the space of phylogenetic trees. In particular, all such degenerations can be obtained from the matching field ideals; see e.g.~\cite{KristinFatemeh}.

\begin{example}\label{exam:tree}
Consider the Grassmannian Gr$(2,4)$, whose corresponding Pl\"ucker ideal 
is $I=\langle p_{12}p_{34}-p_{13}p_{24}+p_{14}p_{23}\rangle$. 
A toric degeneration
of Gr$(2,4)$ is given by the family
$I_t=\langle p_{12}p_{34}-p_{13}p_{24}+tp_{14}p_{23}\rangle$ for $t\in\mathbb{C}$.
Setting
$t = 1$ we obtain $I$, and setting $t = 0$ we get the toric ideal $I_0=\langle p_{12}p_{34}-p_{13}p_{24}\rangle$. The Gr\"obner fan of $I$ consists of seven cones (three 2-dimensional cones, three rays, and the origin). 
Moreover, {\rm Trop~Gr}$(2,n)$ is the subfan of this polyhedral fan on the three rays, all leading to toric varieties; see \cite{ardila2006bergman}.

\end{example}

\subsection{\bf Ordering the generators of $M_{\bf{a}}$} \label{block ordering}
Here, we further investigate the properties of the matching field ideals $M_{\a}$, in particular their minimal free resolutions and their Betti numbers. We begin by recalling the linear quotients property from \cite{HT,Cone}.  
A monomial ideal $M \subset R$ is said to have {\it linear quotients} if there exists an ordering of the generators $M= \langle m_1, \dots, m_k\rangle$ such that for each $j$ the colon ideal $\langle m_1,\ldots,m_{j-1}\rangle : m_j$ is generated by a subset of the variables $\{x_1, \dots, x_n\}$ called the set$(m_j)$, so that
$\langle m_1,\ldots,m_{j-1}\rangle : m_j = \langle x_{j_1}, \dots, x_{j_r}\rangle.$
In particular, for each generator $m_j$, with $j = 1, \dots, k$, we define 
${\rm set}(m_j) = \{k \in [n]: x_k \in \langle m_1,\dots,m_{j-1}\rangle:m_j\}. 
$

\smallskip
To simplify our notation, we use $(i,j,k)$ for the monomial $x_iy_jz_k$. (Note that in this expression we may have either $i<j<k$ or $j<i<k$).
For every pair of elements $(i,j,k)$ and $(\ell,u,v)$ in the ideal 
$M_{\bf{a}}$ with $j\in I_s$ and $u\in I_t$ we define the block ordering $\prec_{{\bf a}}$ such that  
\[
(i,j,k)\prec_{{\bf a}} (\ell,u,v)\quad\text{if and only if}\quad\begin{cases}
k>v, &\text{or}\\
k=v,\ s<t, &\text{or}\\
k=v,\ s=t,\ j>u, &\text{or}\\
k=v,\ s=t,\ j=u,\ i<\ell 
\end{cases}
\]

\begin{example}  We illustrate an ordering of the generators of $M_{{\bf{(4,3)}}}$. The elements are arranged in a series of successive tableaux from left to right. Within each tableau, the elements are ordered from left to right and from top to bottom.
\[\begin{tabular}{ cccc|ccc|ccc|ccc }
  $147\ 247\ 347\ 547\ 647$
\\
  $\quad\ \ 137\ 237\ 537\ 637$
   \\
    $\quad\ \ \quad\ \ 127\ 527\ 627$
   \\
    $\quad\ \ \quad\ \ \quad\ \ 517\ 617$
       \\
    $\quad\ \ \quad\ \ \quad\ \ \quad\ \ 567$   
    \end{tabular}
   \quad
   \quad
   \begin{tabular}{ cccc|ccc|ccc|ccc }
    $146\ 246\ 346\ 546$
\\
  $\quad\ \ 136\ 236\ 536$
   \\
    $\quad\ \ \quad\ \ 126\ 526$
   \\
    $\quad\ \ \quad\ \ \quad\ \ 516$
    \end{tabular}
     \quad
   \quad
   \begin{tabular}{ cccc|ccc|ccc|ccc }
    $145\ 245\ 345$
\\
  $\quad\ \ 135\ 235$
   \\
    $\quad\ \ \quad\ \ 125$
    \end{tabular}
   \quad
   \quad
   \begin{tabular}{ cccc|ccc|ccc|ccc }
    $134\ 234$
\\
  $\quad\ \ 124$
    \end{tabular}
    \quad
   \quad
   \begin{tabular}{ cccc|ccc|ccc|ccc }
    $123$
    \end{tabular}
\]

\end{example}

For each monomial $x_\ell y_uz_v$, we define $S_\a(\ell,u,v)$ as the set of all monomials in $M_{\mathbf{a}}$ that precede $(\ell,u,v)$ in the block ordering $\prec_{\mathbf{a}}$, differing from $(\ell,u,v)$ by exactly one coordinate.

\begin{lemma}\label{lem:support_Lambda}
Consider the ordering $\prec_{{\bf a}}$. 
Then for each $(\ell,u,v)\in M_{\bf{a}}$ we have:
\[
|S_\a(\ell,u,v)|=\begin{cases}
\alpha_s-u+\ell-1+n-v, &\text{if $\ell,u\in I_s$ for $s<r$}\\
\ell-2+n-v, &\text{if $\ell\in I_s$ and $u\in I_t$ for $t<s$}\\
u-2+n-v, &\text{if $\ell,u\in I_r$.}
\end{cases}
\]
 \end{lemma}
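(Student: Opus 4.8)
The plan is to count, for a fixed generator $(\ell,u,v)$ of $M_{\bf a}$, exactly those generators $(i,j,k)$ that (i) precede $(\ell,u,v)$ in the block order $\prec_{\bf a}$ and (ii) agree with $(\ell,u,v)$ in all but one of the three coordinates. I would organize the count by which coordinate is allowed to differ, since the three possibilities are mutually exclusive (two generators that agree in two of the three coordinates $i,j,k$ and also agree in the third are equal).

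\textbf{Step 1: the $z$-coordinate differs.} Here $i=\ell$, $j=u$, and $k\neq v$. First I would observe that for any valid value of the $z$-index $k$, the monomial $x_\ell y_u z_k$ actually lies in $M_{\bf a}$: its column set $\{\ell,u\}\cup\{k\}$ determines, via the matching-field rule, whether the monomial assigned is $x_\ell y_u z_k$ or some reordering, and I must check that the pattern $(\ell,u,\cdot)$ with $u\in I_s$ minimal is consistent for every $k\notin\{\ell,u\}$ (both when $\ell,u\in I_s$ and when $\ell\in I_s,u\in I_t$ with $t<s$, the index $s$ in the definition is unchanged by varying $k$, provided $k$ is not smaller than $\min I_s$; one must handle the small-index values of $k$ carefully, which is where $z_1,z_2$ having weight $w_0$ enters). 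By the first clause of $\prec_{\bf a}$, $(\ell,u,k)\prec_{\bf a}(\ell,u,v)$ iff $k>v$, so these contribute exactly the number of admissible indices $k$ with $v<k\le n$ and $k\notin\{\ell,u\}$. Since $\ell,u\le\alpha_s\le\alpha_{r-1}$ in the non-last-block cases while in the last case $\ell,u\in I_r$, a short case check gives the "$n-v$" summand in each branch (in the last-block case two of the indices $1,\dots,n$ may be excluded differently — I expect the bookkeeping here to be the fiddliest part).

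\textbf{Step 2: the $x$- or $y$-coordinate differs.} Now $k=v$ is fixed. A generator differing from $(\ell,u,v)$ in exactly one of the first two slots has the form $(i,u,v)$ with $i\neq\ell$ or $(\ell,j,v)$ with $j\neq u$. The block order on generators with the same $z$-index is governed by the last three clauses: compare the block index of the $y$-coordinate, then the $y$-coordinate itself (larger is smaller), then the $x$-coordinate (smaller is smaller). I would split into the three branches of the statement according to the relative position of $\ell$ and $u$. In the branch $\ell,u\in I_s$, $s<r$: varying the $x$-index while keeping $y=u$ gives $x$-indices $i$ with $i<\ell$ lying in the appropriate range (contributing $\ell-1$, after subtracting the one value equal to $u$ if it falls in range — reconciled with the "$\ell-1$" term); varying the $y$-index upward to $j>u$ within $I_s$ (so the monomial is still of the same combinatorial type) gives $j$ with $u<j\le\alpha_s$, i.e.\ $\alpha_s-u$ values, matching the "$\alpha_s-u$" term. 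In the branch $\ell\in I_s$, $u\in I_t$ with $t<s$: here the monomial type is $x_\ell y_u z_v$ with the $x$-index in block $I_s$; moving the $y$-index to something with a strictly larger block index is impossible without changing $s$, and moving it within $I_t$ upward would change the type, so the only preceding neighbors are obtained by decreasing the $x$-index, contributing $\ell-2$ (again one excluded value). The branch $\ell,u\in I_r$ is symmetric with the roles adjusted, giving $u-2$. Throughout, I must also verify that each neighbor produced really is a generator of $M_{\bf a}$ — i.e.\ that its three column indices, reordered, produce exactly that monomial under the matching-field rule — which is a direct consequence of the definition of $m_{\{ijk\}}$ together with Definition~\ref{weight}.

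\textbf{Step 3: assembling the count.} Summing the contributions of Step 1 and Step 2 in each of the three branches yields $|S_{\bf a}(\ell,u,v)|$ as claimed. The main obstacle I anticipate is not any single step but the careful enumeration of which triples actually index generators of $M_{\bf a}$ — the matching-field rule is case-dependent, and near the block boundaries (and for the exceptional small $z$-indices) one must make sure no neighbor is double-counted or spuriously included; the arithmetic of the ranges ($i<\ell$, $u<j\le\alpha_s$, $v<k\le n$, minus excluded coincidences) is then routine once the combinatorial description of the neighbors is pinned down.
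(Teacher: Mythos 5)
Your overall strategy --- fix $(\ell,u,v)$, classify the predecessors differing in exactly one coordinate according to which coordinate differs, check for each candidate that it really is a generator of $M_{\bf a}$ and that it precedes $(\ell,u,v)$ in $\prec_{\bf a}$, then count --- is exactly the paper's. The paper carries this out by listing, in each of the three branches, the full set of such predecessors as explicit intervals of $x$-, $y$- and $z$-indices. The difficulty is that where your proposal commits to concrete counts, those counts are wrong, and the parts you defer as "routine bookkeeping near the block boundaries" are precisely where the entire content of the lemma lies.

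Concretely, in the branch $\ell,u\in I_s$ you attribute the summand $\ell-1$ entirely to $x$-variations $(i,u,v)$ with $i<\ell$. But for $i\le\alpha_{s-1}$ the generator of $M_{\bf a}$ associated to the column set $\{i,u,v\}$ is $x_u y_i z_v$, not $x_i y_u z_v$ (the index $i$ is then alone in the minimal block), so only $i\in\{\alpha_{s-1}+1,\dots,\ell-1\}$ contribute, giving $\ell-1-\alpha_{s-1}$ monomials. The missing $\alpha_{s-1}$ comes from downward $y$-variations $(\ell,j,v)$ with $j\le\alpha_{s-1}$: these are generators of the form $x_\ell y_j z_v$ and precede $(\ell,u,v)$ by the clause "$k=v,\ s<t$" of $\prec_{\bf a}$; your account omits them entirely. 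The problem is worse in the branch $\ell\in I_s$, $u\in I_t$, $t<s$: the claim that "the only preceding neighbors are obtained by decreasing the $x$-index" is false. The generators $(\ell,j,v)$ with $j\in\{u+1,\dots,\alpha_t\}$ (same block as $u$, larger index, hence earlier in $\prec_{\bf a}$) and with $j\le\alpha_{t-1}$ all precede and contribute $(\alpha_t-u)+\alpha_{t-1}$ elements, while the admissible $x$-variations contribute only $(u-1-\alpha_{t-1})+(\ell-1-\alpha_t)$, not $\ell-2$; only the sum of the two families equals $\ell-2$. Consequently the appeal to "symmetry" in the branch $\ell,u\in I_r$ is not justified either; that case is in fact the most delicate, since within the last block the clause "$j>u$" selects $y$-indices above $u$ and one must also discard candidates $(\ell,j,v)$ whose index $j$ collides with $v$. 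As written, the proposal does not close: one must actually perform the enumeration of which triples index generators and which of those precede $(\ell,u,v)$, and that enumeration is the whole of the paper's proof.
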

 \begin{proof}
First note that for all $(\ell,u,v)$, the elements $\{(\ell,u,v+1),\ldots,(\ell,u,n)\}$ come before $(\ell,u,v)$ which implies that the variables $z_{v+1},\ldots,z_n$ are in the $S_\a(\ell,u,v)$. 
Now, we list all elements whose difference with 
$(\ell,u,v)$ is either in the first entry or in the second entry. 

\medskip
\noindent
{\bf Case 1.} Let $\ell,u\in I_s$. Then 
the elements coming before $(\ell,u,v)$ 
are:
\[
\{(\alpha_{s-1}+1,u,v),\ldots,(\ell-1,u,v)\}\cup\{(\ell,u+1,v),\ldots,(\ell,\alpha_s,v)\}\cup\{(\ell,1,v),\ldots,(\ell,\alpha_{s-1},v)\}.
\]
In particular, we have:
$S_\a(\ell,u,v) = \{ x_{\alpha_{s-1}+1}, \dots, x_{\ell-1}\} \cup \{y_{u+1}, \dots, y_{\alpha_s}\} \cup \{y_1, \dots, y_{\alpha_{s-1}}\}\cup\{z_{v+1},\ldots,z_n\}.
$

\medskip
\noindent
{\bf Case 2.} Let $\ell\in I_s$, $u\in I_t$ and $t<s$. By our construction, the elements listed below are those that precede $(\ell,u,v)$ (with one exception):
\[
\{(\alpha_{s-1}+1,u,v),\ldots,(u-1,u,v)\}\cup\{(\alpha_s+1,u,v),\ldots,(\ell-1,u,v)\}
\]
\[\cup\{(\ell,u+1,v),\ldots,(\ell,\alpha_{s},v)\}\cup\{(\ell,1,v),\ldots,(\ell,\alpha_{s-1},v)\},
\]
and we have
$
S_\a(\ell,u,v) = \{x_{\alpha_{s-1}+1}, \dots, x_{u-1}\} \cup \{ x_{\alpha_s+1}, \dots, x_{\ell-1}\} \cup \{y_{u+1}, \dots, y_{\alpha_s}\} \\\cup \{y_1, \dots, y_{\alpha_{s-1}}\}\cup\{z_{v+1},\ldots,z_n\}.
$
\medskip

\noindent
{\bf Case 3.} Let $\ell,u\in I_r$. Then 
$$
S_\a(\ell,u,v) = \{ x_{\alpha_{r-1}+1}, \dots, x_{\ell-1}\} \cup  \{y_{\ell+1}, \dots, y_{u-1}\} \cup \{y_1, \dots, y_{\alpha_{r-1}}\}\cup\{z_{v+1},\ldots,z_n\}.
$$
\end{proof}

\begin{theorem}\label{lem:blocklinearquotients}
$M_{\bf a}$ has the linear quotient property with respect to 
$\prec_{\bf a}$.
\end{theorem}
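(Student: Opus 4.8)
The plan is to show that for each generator $m=(\ell,u,v)$ of $M_{\bf a}$, the colon ideal $\langle m_1,\dots,m_{j-1}\rangle : m_j$, where $m_j=m$ and $m_1,\dots,m_{j-1}$ are the generators preceding $m$ in $\prec_{\bf a}$, is generated precisely by the variables in the set $S_\a(\ell,u,v)$ computed in Lemma~\ref{lem:support_Lambda}. This is the standard reformulation of the linear quotients property, so the whole theorem reduces to two containments of ideals.

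First I would prove $\langle S_\a(\ell,u,v)\rangle \subseteq \langle m_1,\dots,m_{j-1}\rangle : m_j$. By the explicit description in the proof of Lemma~\ref{lem:support_Lambda}, every variable in $S_\a(\ell,u,v)$ is witnessed by a specific generator $m'=(\ell',u',v')$ preceding $(\ell,u,v)$ that differs from it in exactly one coordinate; for each such $m'$, the quotient $m'/\gcd(m',m)$ is a single variable, namely the one recorded in $S_\a$, and since $m'\mid x_i\cdot m$ for that variable $x_i$, we get $x_i\in \langle m'\rangle : m \subseteq \langle m_1,\dots,m_{j-1}\rangle : m$. Running over the three cases of the lemma makes this direction essentially bookkeeping, using that the witnessing elements were already verified to lie in $M_{\bf a}$ and to precede $(\ell,u,v)$.

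The reverse containment $\langle m_1,\dots,m_{j-1}\rangle : m_j \subseteq \langle S_\a(\ell,u,v)\rangle$ is the substantive part. It suffices to show that for every generator $m'=(\ell',u',v')$ strictly preceding $(\ell,u,v)$, the monomial $m'/\gcd(m',m)$ is divisible by some variable in $S_\a(\ell,u,v)$. I would analyze this by the position of $v'$ relative to $v$: if $v'>v$ then $z_{v'}\mid m'$, $z_{v'}\nmid m$ (as $m$ has $z$-degree supported at $v$), and $z_{v'}\in S_\a(\ell,u,v)$ by the opening remark of the lemma's proof, so we are done. If $v'=v$, then by definition of $\prec_{\bf a}$ the pair $(\ell',u')$ precedes $(\ell,u)$ in the block order on the first two coordinates, and I must check that the $x$- or $y$-variable by which $m'$ differs from $m$ (after cancelling common factors) lands in the $x$- or $y$-part of $S_\a$; here one uses the block structure of the matching field monomials — whether the ``small'' index sits in the first or second coordinate is dictated by $|\{i,j,k\}\cap I_s|$ — together with a careful case split mirroring Cases 1--3 of Lemma~\ref{lem:support_Lambda}. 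The main obstacle is exactly this $v'=v$ case: one has to rule out that $m'$ and $m$ differ in both of the first two coordinates in a way that produces a variable outside $S_\a$, and to see that the monomials of $M_{\bf a}$ with a fixed third coordinate $v$ are totally ordered by $\prec_{\bf a}$ in a manner compatible with the nested structure of the index sets $I_1,\dots,I_r$; I would handle it by observing that within a fixed $v$, the block ordering refines to a shelling-type order on the relevant ``staircase'' of tableaux illustrated in the example, so each new generator is glued along a face cut out by the recorded variables.

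Finally, combining the two containments gives $\langle m_1,\dots,m_{j-1}\rangle : m_j = \langle x_k : k\in S_\a(\ell,u,v)\rangle$, a monomial ideal generated by variables, for every $j$, which is precisely the linear quotients property with respect to $\prec_{\bf a}$, completing the proof. As a byproduct the cardinalities in Lemma~\ref{lem:support_Lambda} give the number of generators of each colon ideal, which is what will feed into the Betti number computation in the sequel.
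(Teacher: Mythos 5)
Your overall strategy is the same as the paper's: identify the colon ideal $\langle m_1,\dots,m_{j-1}\rangle : m_j$ with $\langle S_\a(\ell,u,v)\rangle$, where $S_\a$ is the variable set computed in Lemma~\ref{lem:support_Lambda}, and split on whether the earlier generator agrees with $(\ell,u,v)$ in the third coordinate. The containment $\langle S_\a(\ell,u,v)\rangle \subseteq \langle m_1,\dots,m_{j-1}\rangle : m_j$ via the one-coordinate witnesses, and the $v'>v$ case of the reverse containment via $z_{v'}\in S_\a(\ell,u,v)$, are both correct and match the paper.

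The gap is that the case you yourself single out as ``the substantive part'' --- $v'=v$ with the first two coordinates differing --- is asserted rather than proved. Saying that one ``uses the block structure together with a careful case split'' and that the order ``refines to a shelling-type order on the staircase'' names the phenomenon but does not verify it. This is exactly where all the content of the paper's argument lives: after reducing to $f=x_{\ell'}y_{u'}$, one must check, separately according to whether $\ell$ and $u$ lie in the same block $I_s$ or in different blocks, and then according to which block $u'$ (and possibly $\ell'$) lies in, that either $y_{u'}$ or $x_{\ell'}$ is genuinely one of the variables listed in Lemma~\ref{lem:support_Lambda}; each subcase requires exhibiting a specific preceding generator such as $(\ell,u',n)$ or $(\ell',u,v)$ and confirming it precedes $(\ell,u,v)$ in $\prec_{\bf a}$. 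Without carrying out this (roughly five-way) case analysis, one cannot rule out your own worry that $m'$ and $m$ differ in both of the first two coordinates in a way that produces only variables outside $S_\a(\ell,u,v)$. So the proposal has the right architecture but leaves the decisive combinatorial verification undone.
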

\begin{proof}
It suffices to show that $
((i,j,k) : (\ell,u,v)) \subset \langle S_\a(\ell,u,v)\rangle$ for any $(i,j,k) \prec_{\bf a} (\ell, u,v)$.
Note that $((i,j,k) : (\ell,u,v)) = \langle f\rangle$ for some monomial $f$ dividing $x_iy_jz_k$, and we need to show that this monomial lies in the ideal $\langle S_\a(\ell,u,v)\rangle$.

\textbf{Case 1.}
Let $k \neq v$. First note that, by definition of $\prec_{\bf a}$,  we have $k>v$. Thus, by Lemma~\ref{lem:support_Lambda}, we have $(\ell,u,k) \in M_{\bf a}$ and $z_k \in S_\a(\ell,u,v)$. Hence, $f \in \langle z_k\rangle$.

\textbf{Case 2.}  Suppose $k = v$, but $j \neq u$.  If $f = x_i$ or $y_j$ we are done, so assume $f = x_iy_j$.  We will show in all cases either $x_i \in S_\a(\ell,u,v)$ or $y_j \in S_\a(\ell,u,v)$.

First suppose $\ell, u \in I_s$ (and so in particular $\ell < u$).  If $j \in I_s$ then $j > u$ and so $\ell < j \in I_s$ and $(\ell,j,n) \prec_{\bf a} (\ell,u,v)$, hence $y_j \in S_\a(\ell,u,v)$.  If $j \notin I_s$, then we have $j \in I_t$ for $t < s$ and so $j < \ell, m$.  In particular, $(\ell,j,n) \prec_{\bf a} (\ell,u,v)$ so again $y_j \in S_\a(\ell,u,v)$ as desired.
Now suppose $\ell \in I_s, u \in I_t$ for $s \neq t$ (so in particular $u< \ell$).  If $j \in I_t$ then $j > u$ and since $\ell \in I_s$ we have $\ell > j$ and so $(\ell,j,n) \prec_{\bf a} (i,j,k)$ and again $y_j \in S_\a(\ell,u,v)$.
If $j \notin I_t$ then $j < u < \ell$.  If $j \notin I_s$ then $(\ell, j, n) \prec_{\bf a} (i,j,k)$ and again $y_j \in S_\a(\ell,u,v)$.  Otherwise, if $j, i \in I_s$ then $i < j$ and $(i,u,v) \prec_{\bf a} (i,j,k)$ and $x_j \in S_\a(\ell,u,v)$.  Finally, if $j \in I_s$ but $i \notin I_s$ then $i > j$.  Since $j,\ell \in I_s$ we also have $i > \ell > m$ and $(i,u,v) \prec_{\bf a} (\ell,u,v)$, hence $x_i \in S_\a(\ell,u,v)$.

\textbf{Case 3.} Suppose $k=v$ and $j = u$.  Then $f = x_i$, which completes the proof.
\end{proof}

It is shown in \cite{HT} that for any monomial ideal $M=\langle m_1,\ldots,m_k\rangle$ with linear quotient property, 
the Betti numbers can be explicitly described as follows.
\[\beta_\ell(M)=\sum_{j=1}^k{|{\rm set}(m_j)|\choose \ell}.\]

\begin{example}\label{exam:diag}
Consider the monomial ideal $M_{{\rm Diag}(n)}=\langle x_iy_jz_k:\ 1\leq i<j<k\leq n\rangle$. Note that each monomial is representing the diagonal term of the corresponding minor in a $3\times n$ matrix of indeterminants.
Consider a lexicographic ordering on the generators of $M_{{\rm Diag}(n)}$. Then 
\[
{\rm set}(x_iy_jz_k)=\{x_1,\ldots,x_{i-1}\}\cup\{y_{i+1},\ldots,y_{j-1}\}\cup\{z_{j+1},\ldots,z_{k-1}\}
\]
and hence, $|{\rm set}(x_iy_jz_k)|=(i-1)+(j-1-i)+(k-1-j)=k-3$.
 \end{example}

 As an immediate corollary of Theorem~\ref{lem:blocklinearquotients}, and Example~\ref{exam:diag} we have that:
\begin{corollary}\label{thm:betti}
The Betti numbers of the ideal $M_{{\rm Diag}(n)}$ are equal to
\[
\beta_\ell(M_{{\rm Diag}(n)})=\sum_{k=3}^{n}{k-1\choose 2}{k-3\choose \ell}.
\]
\end{corollary}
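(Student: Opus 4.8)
The plan is to combine the Betti number formula of Herzog--Takayama with the combinatorial count of the linear-quotient sets provided by Example~\ref{exam:diag}. Concretely, the ideal $M_{{\rm Diag}(n)}$ equals $M_{\bf a}$ for the matching field of type ${\bf a}=(n)$ (a single block), so by Theorem~\ref{lem:blocklinearquotients} it has linear quotients with respect to the ordering $\prec_{\bf a}$, and hence we may apply the formula
\[
\beta_\ell(M_{{\rm Diag}(n)})=\sum_{j}{|{\rm set}(m_j)|\choose \ell},
\]
where the sum runs over all generators $m_j$. I would first note that, since the linear-quotient Betti numbers are independent of which valid ordering is chosen (the graded Betti numbers are an invariant of the ideal), it suffices to work with the lexicographic ordering of Example~\ref{exam:diag}, for which the set sizes are already computed: $|{\rm set}(x_iy_jz_k)|=k-3$ for each generator $x_iy_jz_k$ with $1\le i<j<k\le n$.

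The remaining step is purely a matter of reorganizing the sum by the value of the largest index $k$. For a fixed $k\in\{3,\dots,n\}$, the generators $x_iy_jz_k$ with $1\le i<j<k$ are indexed by the $2$-subsets $\{i,j\}$ of $\{1,\dots,k-1\}$, of which there are exactly $\binom{k-1}{2}$; each such generator contributes $\binom{k-3}{\ell}$ to the total, a quantity depending only on $k$. Grouping the sum accordingly gives
\[
\beta_\ell(M_{{\rm Diag}(n)})=\sum_{k=3}^{n}\binom{k-1}{2}\binom{k-3}{\ell},
\]
which is the claimed formula. One should also observe that generators with $k<3$ do not occur (there is no triple $i<j<k$ with $k\le 2$), so the sum legitimately starts at $k=3$; equivalently $\binom{k-1}{2}=0$ for $k<3$, so extending the range does no harm.

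There is essentially no obstacle here: the statement is an immediate corollary, as the paper itself announces. The only point requiring a word of care is the appeal to order-independence of the graded Betti numbers --- one must make sure that the set sizes used (from the lexicographic order of Example~\ref{exam:diag}) and the linear-quotient property established (for $\prec_{\bf a}$ in Theorem~\ref{lem:blocklinearquotients}) are being reconciled correctly; but since $\beta_\ell(M)$ does not depend on the chosen linear-quotients ordering, and one verifies directly that the lexicographic order on $M_{{\rm Diag}(n)}$ does give linear quotients with the stated sets, the computation goes through as written.
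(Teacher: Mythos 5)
Your proposal is correct and follows exactly the route the paper intends: apply the Herzog--Takayama formula $\beta_\ell(M)=\sum_j\binom{|{\rm set}(m_j)|}{\ell}$ with the set sizes $|{\rm set}(x_iy_jz_k)|=k-3$ from Example~\ref{exam:diag}, then group the generators by their largest index $k$, of which there are $\binom{k-1}{2}$ for each $k$. The paper gives no further argument (it states the corollary as immediate), and your extra remark reconciling the lexicographic ordering of the example with the block ordering of Theorem~\ref{lem:blocklinearquotients} --- via the order-independence of the graded Betti numbers --- is a worthwhile clarification of a point the paper leaves implicit.
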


\subsection{Cellular resolution of $M_{\bf a}$}
Consider the matching field ideal $M_{\bf{a}}$, generated minimally by the monomials $m_I$ for all $3$-subsets $I$ of $[n]$.  We construct a CW-complex supporting the minimal free resolution of $M_{\bf{a}}$.
Let $H_{\bf a}$ be the $3$-hypergraph on the vertex set $V(H_{\bf a})=\{x_1,\ldots,x_n,y_1,\ldots,y_n,z_1,\ldots,z_n\}$, where the edges corresponding to the monomial generators of $M_{\bf a}$. 
Given a $3$-hypergraph  $H_{\bf a}$ we define a family of $2$ and $1$-hypergraphs as follows:
\begin{center}
    $z_k$-layer$(H_{\bf a})=\{x_iy_j|x_iy_jz_k \in E(H_{\bf a})\}$,\quad
$z_k y_l$-layer$(H_{\bf a})=\{x_i|x_iy_lz_k \in E(H_{\bf a})\}$.
\end{center}

\medskip
From the combinatorial description of the minimal generators of $\Ma$ it is easy to see that: 
\begin{lemma}\label{lem:1-layer}
Assume that $x_iy_jz_k \prec_{w_{\bf a}} x_ly_mz_n $. Then
 $z_n$-layer$(H_{\bf a})\subset z_k$-layer$(H_{\bf a})$.
Moreover, if $k = n$ then $z_ky_m$-layer$(H_{\bf a})\subset z_ky_j$-layer$(H_{\bf a})$.
\end{lemma}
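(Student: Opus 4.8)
The plan is to unwind both assertions directly from the combinatorial description of the generators $m_I$ of $M_{\bf a}$ together with the weight order $\prec_{\wa}$ from Definition~\ref{weight}, and to reuse the bookkeeping already carried out in Lemma~\ref{lem:support_Lambda}. Recall that $x_iy_jz_k\in E(H_{\bf a})$ exactly when $x_iy_jz_k = m_{\{a,b,c\}}$ for the unique $3$-subset $\{a,b,c\}=\{i,j,k\}$. Since the largest index always sits on a $z$-variable in our matching field (the minimal block index $s$ is never attained by $k$ once $|I_s\cap\{i,j,k\}|\le 2$, and the exceptional shape $x_jy_iz_k$ still has $k$ the largest), membership of $x_iy_jz_k$ in $E(H_{\bf a})$ forces $k>\max(i,j)$; conversely the pair $(i,j)$ and the value $k$ together determine whether the generator is $x_iy_jz_k$ or $x_jy_iz_k$. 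Hence $z_k$-layer$(H_{\bf a})$ is precisely the set of admissible ``top-two'' patterns $x_iy_j$ that can be completed by the third coordinate $k$, and the content of the first claim is that this family of patterns only grows as $k$ decreases.

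For the first inclusion, I would argue as follows. Fix $x_iy_jz_n\in E(H_{\bf a})$, so that $x_iy_j\in z_n$-layer$(H_{\bf a})$; I must show $x_iy_jz_k\in E(H_{\bf a})$, i.e.\ that the generator of $M_{\bf a}$ attached to the $3$-set $\{i,j,k\}$ is exactly $x_iy_jz_k$ (not $x_jy_iz_k$). Write $s$ for the minimal block index meeting $\{i,j\}$ and $s'$ for the minimal block index meeting $\{i,j,k\}$. Because $\prec_{\wa}$ is defined so that $k<n$ only enlarges the pool of legal third coordinates — concretely, $k<n$ means $k$ lies in some block $I_{s'}$ with $s'\le s$ (this is exactly the content of the case split in Lemma~\ref{lem:support_Lambda} and of the hypothesis $x_iy_jz_k\prec_{\wa}x_\ell y_m z_n$, which we read off as $k\le n$ with the block of $\{i,j\}$ positioned correctly) — the rule producing $m_{\{i,j,k\}}$ evaluates the ``$|\{i,j,k\}\cap I_{s'}|$'' test on the same pair $\{i,j\}$ as for $\{i,j,n\}$ whenever $k$ is not itself the unique block-$I_{s'}$ element; and in the remaining subcase where $k\in I_{s'}$ with $s'<s$, one checks directly that $|\{i,j,k\}\cap I_{s'}|=1$ forces the pattern $x_{\min}y_{\cdot}z_k$ to coincide with $x_iy_jz_k$ because $i,j$ both sit in later blocks. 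In every subcase the resulting monomial has $x_iy_j$ in its first two coordinates, so $x_iy_j\in z_k$-layer$(H_{\bf a})$, giving the inclusion.

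For the ``moreover'' part, assume $k=n$, so the hypothesis $x_iy_jz_n\prec_{\wa}x_\ell y_m z_n$ together with the tie-breaking revlex order of Definition~\ref{weight} forces a comparison purely among the $y$-indices: unpacking $\prec_{\wa}$ in the case $k=v=n$ gives either the block of $j$ strictly precedes the block of $m$, or the blocks agree and $j>m$ — in both cases one has $\wa(y_m)<\wa(y_j)$ and, more to the point, $m$ is ``smaller'' than $j$ in the sense that any $3$-set completing $x_\cdot y_m z_n$ also completes $x_\cdot y_j z_n$. Concretely I would fix $x_i\in z_n y_m$-layer$(H_{\bf a})$, meaning $x_iy_mz_n=m_{\{i,m,n\}}$, and show $x_iy_jz_n=m_{\{i,j,n\}}$: the only obstruction is that the block test could flip the roles of $i$ and $j$, and the inequalities on blocks coming from $\prec_{\wa}$ rule this out exactly as in Case~2 of the proof of Theorem~\ref{lem:blocklinearquotients}, where the same ``$y_j\in S_\a(\ell,u,v)$ versus $x_i\in S_\a(\ell,u,v)$'' dichotomy was resolved.

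I expect the main obstacle to be purely bookkeeping rather than conceptual: there are several block configurations of the triple $(i,j,k)$ relative to the blocks $I_1,\dots,I_r$ — $i,j$ in the same block, in different blocks, one or both in the last block $I_r$ — and for each one must confirm that decreasing $k$ (resp.\ increasing from $m$ to $j$) never triggers the ``$|\{i,j,k\}\cap I_s|=1$'' clause in a way that swaps the $x$- and $y$-coordinates. The cleanest route is to phrase the invariant as: \emph{the first two coordinates of $m_{\{i,j,k\}}$ depend only on the pair $\{i,j\}$ and on whether the smallest-block element of $\{i,j,k\}$ is unique}, and then observe that under the stated $\prec_{\wa}$-inequality the smallest-block element is always one of $i,j$ (never $k$, except when it does not change the pattern), so the first two coordinates are unchanged. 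Once that invariant is isolated, both inclusions are immediate, and no genuinely new estimate beyond those in Lemma~\ref{lem:support_Lambda} is needed.
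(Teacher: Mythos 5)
The paper offers no proof of this lemma at all --- it is asserted as following ``easily'' from the combinatorial description of the generators --- so I can only judge your argument on its own terms. Your closing invariant is the right one and is the entire content of the first claim: for a $3$-subset $\{a<b<c\}$ the minimal block meeting it is the block of $a$, and $|\{a,b,c\}\cap I_s|\ge 2$ holds iff $b\in I_s$ (blocks are consecutive intervals, so $c\in I_s$ already forces $b\in I_s$); hence the $x/y$-pattern of $m_{\{a,b,c\}}$ depends only on whether $a$ and $b$ share a block and not on $c$. Stated this way, the first inclusion is immediate, and this is surely the verification the paper has in mind.

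Two genuine problems remain. First, a direction error: the order in the hypothesis must be the block order $\prec_{\bf a}$ of Section~\ref{block ordering} (the subscript $w_{\bf a}$ in the statement is an overloading --- the construction of $G_\a$ explicitly orders the edges of $H_\a$ by $\prec_{\bf a}$), and under $\prec_{\bf a}$ the \emph{earlier} generator has the \emph{larger} $z$-index, so the hypothesis gives $k\ge n$, not ``$k\le n$'' as you read it. This is not cosmetic: with $k\le n$ the inclusion $z_n$-layer$\,\subseteq z_k$-layer is false, since a pair $x_iy_j$ in the $z_n$-layer satisfies only $\max(i,j)<n$ and need not be completable by $z_k$ at all. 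Your middle paragraph inherits this confusion --- the subcase ``$k\in I_{s'}$ with $s'<s$'', $s$ being the block of $\{i,j\}$, is vacuous because $k$ must exceed both $i$ and $j$ for $x_iy_jz_k$ to be a generator, and your treatment of it is incoherent. Second, the ``moreover'' part is only gestured at. What is actually needed is the explicit description: for $u\in I_t$ one has $z_ky_u$-layer $=\{x_a:\ a\in\{\alpha_{t-1}+1,\dots,u-1\}\cup\{\alpha_t+1,\dots,k-1\}\}$ (same-block completions below $u$, different-block completions strictly between $u$ and $k$); then the two cases of the hypothesis --- $j,m$ in the same block with $j>m$, or $j$ in a strictly earlier block than $m$ --- each give the containment by inspecting these index intervals. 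Deferring to Case~2 of Theorem~\ref{lem:blocklinearquotients} does not substitute for this short but necessary check.
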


We proceed by constructing another $3$-hypergraph $G_\a$, which is isomorphic to $H_{\bf a}$ and serves as a co-interval graph, as introduced in \cite{dochtermann2012cellular}. 
First, we order every edge of $H_{\bf a}$ according to the block ordering $\prec_{{\bf a}}$. Let us assume without loss of generality that we have $m$ distinct blocks.
In $z_m$-layer$(H_{\bf a})$ assume that we have $k$ distinct blocks with respect to the same ordering. Assume without loss of generality that $y_{m_1}$ is the lowest term in this layer and $z_my_{m_1}$- layer$(H_{\bf a})$ has $l$ distinct entries mainly $x_{k_1},x_{k_2},\ldots,x_{k_l}$.

\begin{definition}\label{def:G_a}
Given the $3$-hypergraph $H_\a$ we define the graph $G_{\bf a}$ on the vertex set
$V(G_{\bf a})=[m+k+l]$ with the edge set
$E(G_{\a}) = \{f(z_i)f(y_j)f(x_k)|x_ky_jz_i \in E(H_{\bf a})\}$, where $f$ is the map from the set $S_{\bf a}=\{z_1,z_2,\ldots,z_m,y_{m_1},y_{m_2},\ldots,y_{m_k},\\ x_{k_1},x_{k_2},\ldots,x_{k_l}\}$ to $[m+k+l]$ as follows:
\begin{itemize}
    \item $f(z_i)=m-i+1$ for $1\leq i\leq m$
    \item $f(y_{m_i})=m+i$ for $1\leq i\leq k$
    \item $f(x_{k_i})=m+k+i$ for $1\leq i\leq l$.
\end{itemize}
\end{definition}

We recall the notion of the $v$-layer and co-interval graph from \cite{dochtermann2012cellular}.
Let $H$ be a $d$-graph and let $v \in V(H)$ be some vertex. Then the $v$–layer
of $H$ is a $(d - 1)$-graph on $V \backslash v$ with edge set 
\hspace{1cm}

$
\quad\quad\quad\{v_1v_2\dots v_{d-1} | vv_1v_2\dots v_{d-1} \in E(H) \text{ and } v < v_1, v_2,\dots , v_{d-1}\}.
$
\begin{definition}
The class of co-interval $d$-graphs is defined recursively as follows.
Any $1$-graph is co-interval. For $d > 1$, the finite $d$-graph $H$ with vertex set $V(H) \subseteq Z$ is
co-interval if
\begin{enumerate}
    \item for every $i \in V (H)$ the $i$-layer of H is co-interval.
    \item for every pair $i < j$ of vertices, the $j$-layer of $H$ is a subgraph of its $i$-layer.
\end{enumerate}
\end{definition}
\begin{theorem}
The $3$-hypergraph $G_\a$ is a co-interval graph.
\end{theorem}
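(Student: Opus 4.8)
The plan is to verify directly that $G_\a$ satisfies the two recursive conditions in the definition of a co-interval $d$-graph, using the combinatorial structure of the layers of $H_\a$ established in Lemma~\ref{lem:1-layer}. The map $f$ of Definition~\ref{def:G_a} is constructed precisely so that the block ordering $\prec_{\bf a}$ on the generators of $M_\a$ is turned into the natural order on $\mathbb{Z}$: the $z$-variables get the smallest labels (in reverse, so that $z_m \mapsto 1$ is the $\prec_{\bf a}$-largest $z$-index), then the $y$-variables appearing in the bottom $z$-layer, then the $x$-variables appearing in the bottom $zy$-layer. Thus ``smaller label'' corresponds to ``earlier in $\prec_{\bf a}$'' on the relevant strata, and the two conditions of the definition become restatements of the containment statements in Lemma~\ref{lem:1-layer}.

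First I would handle the $3 \to 2$ step. For a vertex $i \in V(G_\a)$ of the form $i = f(z_k)$, the $i$-layer of $G_\a$ is, by construction, the image under $f$ of the $z_k$-layer$(H_\a)$, viewed as a $2$-graph on labels $\{f(y),f(x)\}$. For vertices $i$ with $f^{-1}(i)$ a $y$- or $x$-variable, one must check the $i$-layer is still co-interval; but such a layer is a union of pieces of the form ``$z_k y_j$-layer'' with $f(z_k), f(y_j) < i$, each of which is a $1$-graph, and any $1$-graph is co-interval by definition — here I would note the edges of these layers are intervals of $x$-labels (the sets $\{x_{\alpha_{s-1}+1},\dots\}$ appearing in Lemma~\ref{lem:support_Lambda}'s case analysis are consecutive), which is what makes the eventual $1$-graphs genuinely interval-like, though for co-intervality of $1$-graphs nothing needs to be checked. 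Condition (2) at the top level, ``$j$-layer $\subseteq$ $i$-layer for $i<j$,'' is exactly the first sentence of Lemma~\ref{lem:1-layer} transported through $f$: if $i = f(z_k)$ and $j = f(z_{k'})$ with $i<j$ then $k$ is $\prec_{\bf a}$-larger, i.e. $z_{k'}$-layer $\subseteq z_k$-layer. For the mixed cases where $i$ or $j$ is not a $z$-label I would observe that the corresponding layers live on disjoint or nested vertex sets in a way that makes the containment automatic (e.g.\ a $y$-label's layer sits inside the bottom $z$-layer).

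Then I would recurse into the $2 \to 1$ step: for a fixed $z_k$-layer$(H_\a)$, which is a $2$-graph on $y$- and $x$-labels, I must check its $y_j$-layers are $1$-graphs (automatic) and that for $f(y_j) < f(y_{j'})$ the $y_{j'}$-layer is contained in the $y_j$-layer. When $k = m$ is the bottom $z$-level this is the second sentence of Lemma~\ref{lem:1-layer}. For a general $z_k$-layer one repeats the same argument with the block ordering restricted to that layer — the key point being that the description of which $x_i$ appear with a given $y_j$ and $z_k$ (Cases 1--3 of Lemma~\ref{lem:support_Lambda}, specialized to $v = k$) depends on $j$ only through which block $I_s$ or $I_t$ it lies in and monotonically shrinks as $j$ moves later in $\prec_{\bf a}$. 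So I would isolate a small lemma: ``for fixed $k$ and $\prec_{\bf a}$-consecutive $y$-indices, the $x$-supports are nested,'' proved by inspecting the three block cases, and then feed it into both the top-level containment for $z$-labels and this $2\to1$ containment.

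The main obstacle I anticipate is bookkeeping rather than conceptual: the map $f$ is only defined on the set $S_\a$ of variables that actually occur in the relevant bottom layers, so when I look at an $i$-layer for $i$ a $y$- or $x$-label I have to be careful that all vertices of that layer indeed received $f$-labels and that ``$i < j$'' in $\mathbb{Z}$ after relabeling faithfully reflects $\prec_{\bf a}$ across the three regimes of Lemma~\ref{lem:support_Lambda} (both-in-$I_s$, split blocks, both-in-$I_r$), where the formulas for the supports change shape. I would therefore organize the proof as: (i) record that $f$ is order-reversing on $z$'s and order-preserving in the appropriate sense on $y$'s and $x$'s; (ii) prove the nesting lemma for $x$-supports at fixed $(k,\text{block})$; (iii) assemble conditions (1) and (2) of the definition by induction on $d = 3,2,1$, invoking Lemma~\ref{lem:1-layer} for the two boundary cases and the nesting lemma for the interior; and (iv) conclude $G_\a$ is co-interval, hence — as in \cite{dochtermann2012cellular} — supports a cellular (indeed minimal) free resolution of $M_\a$.
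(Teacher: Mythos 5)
Your overall strategy is the same as the paper's: both proofs verify the two recursive conditions of the co-interval definition using the nesting of layers recorded in Lemma~\ref{lem:1-layer}, transported through the relabeling $f$. Where you diverge is at the second level of the recursion. The paper observes that each $j$-layer of $G_\a$ is an \emph{induced} subgraph of the $1$-layer (the layer of the $\prec_{\bf a}$-smallest $z$-label), and then invokes the fact that induced subgraphs of co-interval graphs are co-interval (\cite[Proposition 4.2]{dochtermann2012cellular}); this collapses the whole argument to checking co-intervality of a single $2$-graph, whose own layer-nesting is exactly the second statement of Lemma~\ref{lem:1-layer}. You instead propose to verify the $y$-layer nesting inside \emph{every} $z_k$-layer separately, via an auxiliary ``$x$-supports are nested'' lemma proved by the three-block case analysis. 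That route is valid but does strictly more work; note also that the second statement of Lemma~\ref{lem:1-layer} already applies whenever the two monomials share the same $z$-index (the hypothesis ``$k=n$'' there means equality of the two $z$-indices, with $n$ regrettably overloaded), so your extra nesting lemma is in fact already available and you need not restrict it to the bottom $z$-level.

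Two small inaccuracies to fix. First, for a vertex $i$ of $G_\a$ carrying a $y$- or $x$-label, the $i$-layer is not ``a union of $z_ky_j$-layers'': by the definition of $v$-layer only edges with $v$ as the \emph{minimum} vertex contribute, and since every edge of $G_\a$ contains a $z$-label smaller than all $y$- and $x$-labels, these layers are empty $2$-graphs, hence vacuously co-interval. Second, condition (2) for such mixed pairs $i<j$ is then also vacuous rather than ``automatic by disjointness of vertex sets.'' With these repairs your argument goes through and proves the theorem.
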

\begin{proof}
It is enough to prove the following two claims:
\begin{enumerate}
    \item For every $i \in V(G_\a)$ the $i$-layer of $G_{\a}$ is co-interval.
    \item For every pair $i<j$ of vertices, the $j$-layer of $G_{\bf a}$ is a subgraph of its $i$-layer.
\end{enumerate}
The construction of $G_\a$ 
implies that $j$-layer$(G_\a)\subseteq i$-layer$(G_\a)$ for $i<j$.
More precisely, $j$-layer$(G_\a)$ is the induced subgraph of the $i$-layer$(G_\a)$ since in our original hypergraph $H_\a$ the corresponding $z_{f^{-1}(j)}$-layer$(H_\a)$ is an induced subgraph of $z_{f^{-1}(i)}$-layer$(H_\a)$. Hence, it is sufficient to show that $1$-layer$(G_\a)$ is co-interval since any induced subgraph of a co-interval graph is also co-interval (see \cite[Proposition 4.2]{dochtermann2012cellular}).
Therefore, we only need to ensure that $j$-layer$(\text{1-layer}(G_\a))\subseteq i$-layer$(\text{1-layer}(G_\a))$ for all $i<j$ which is directly implied by Lemma~\ref{lem:1-layer}.
\end{proof}

\begin{corollary}
The polyhedral complex $X_{G_\a}$ supports a minimal cellular resolution of the ideal~$M_\a$. 
\begin{proof}
The map $f$ from Definition~\ref{def:G_a} induces a natural ring isomorphism: 
$$
k[z_1,\ldots,z_m,y_{m_1},\ldots,y_{m_k},x_{k_1},,\ldots,x_{k_l}] \rightarrow k[t_1,\ldots,t_{m+k+l}].
$$ 
We denote $N_\a$ for the ${\rm image}(f)$. The complex $X_{G_\a}$ supports a minimal cellular resolution of the monomial ideal $N_\a$ and $N_\a \cong M_\a$ (see \cite[Theorem 4.4]{dochtermann2012cellular}). 
\end{proof}
\end{corollary}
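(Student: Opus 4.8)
The statement to be proved is the final Corollary: that the polyhedral complex $X_{G_\a}$ supports a minimal cellular resolution of $M_\a$. The plan is to reduce everything to the machinery of Dochtermann--Engström on co-interval graphs, which has just been invoked in the preceding theorem. The key observations that make this work are (i) $G_\a$ is a co-interval $3$-graph (the previous theorem), and (ii) $G_\a$ is the image of $H_\a$ under the relabeling map $f$ of Definition~\ref{def:G_a}, which is a bijection of vertex sets restricted to the support of the generators of $M_\a$.

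First I would make precise the bridge between hypergraphs and monomial ideals: to a $d$-graph $G$ on a vertex set identified with variables one associates the squarefree monomial ideal $I(G)$ generated by $\prod_{v \in e} v$ over edges $e \in E(G)$. Under this dictionary, $I(H_\a) = M_\a$ by construction of $H_\a$ (its edges are exactly the monomial generators $m_I$), and $I(G_\a) = N_\a$ where $N_\a \subset k[t_1,\dots,t_{m+k+l}]$ is generated by the images of the $m_I$ under $f$. Next I would observe that $f$ extends to a graded ring isomorphism $k[z_1,\dots,z_m,y_{m_1},\dots,y_{m_k},x_{k_1},\dots,x_{k_l}] \to k[t_1,\dots,t_{m+k+l}]$ sending the generator associated to the variable $v$ to the generator $t_{f(v)}$; since every variable actually occurring in some generator $m_I$ lies in the source set $S_\a$, this isomorphism carries $M_\a$ onto $N_\a$, so $M_\a \cong N_\a$ as graded modules and they have identical minimal free resolutions and Betti numbers.

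Then I would apply \cite[Theorem 4.4]{dochtermann2012cellular}: for a co-interval $d$-graph $G$, the associated polyhedral complex $X_G$ supports a minimal cellular resolution of $I(G)$. Applying this to $G = G_\a$, which is co-interval by the preceding theorem, yields that $X_{G_\a}$ supports a minimal cellular resolution of $N_\a$. Composing with the isomorphism $N_\a \cong M_\a$ from the previous step transports this minimal cellular resolution to one of $M_\a$, supported on the same complex $X_{G_\a}$. This completes the argument.

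The only genuine content beyond bookkeeping is verifying that $f$ really is a bijection onto the relevant index set and that no variable outside $S_\a = \{z_1,\dots,z_m,y_{m_1},\dots,y_{m_k},x_{k_1},\dots,x_{k_l}\}$ divides any generator of $M_\a$; this is where the nested-layer structure (Lemma~\ref{lem:1-layer}) is implicitly used, since it guarantees that the $y$- and $x$-indices appearing across all edges are exactly those appearing in the lowest layers, so the sizes $k$ and $l$ in Definition~\ref{def:G_a} correctly count all $y$- and $x$-variables in the support. I expect this support-counting step to be the main (though still routine) obstacle; once it is in place, the corollary is a direct translation of \cite[Theorem 4.4]{dochtermann2012cellular} through the ring isomorphism.
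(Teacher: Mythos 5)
Your proposal is correct and follows essentially the same route as the paper: identify $M_\a$ with the ideal $N_\a$ of the relabeled graph $G_\a$ via the ring isomorphism induced by $f$, then invoke \cite[Theorem 4.4]{dochtermann2012cellular} for co-interval graphs to obtain the minimal cellular resolution supported on $X_{G_\a}$. The extra support-counting check you flag is a reasonable point of care but is exactly the content the paper leaves implicit in Definition~\ref{def:G_a}.
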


\begin{remark}
    Constructing a CW complex whose faces encode the free resolutions of the ideal is introduced in \cite{BS98} by Bayer and Sturmfels, and since then, it has been established that several classes of monomial ideals arising from graphs, matroids, and posets have cellular resolutions. See, e.g.~\cite{novik2002syzygies,cover,FarbodFatemeh,mohammadi2024family,clarke2024minimal,divisor}.
\end{remark}

\begin{example}
Let $n=5$ and  $\a=[123|45]$. Then the edges of the graph
$H_\a$ are:
123, 134, 234, 124, 135, 235, 435, 125, 425, 415.
Applying the map $f$ from Definition~\ref{def:G_a}, 
the edges of the graph $G_\a$ are:
$
357, 247, 248, 257, 147, 148, 149, 157, 159, 169.
$
The polyhedral complex supporting the minimal free resolution of $M_\a$ is shown 
in Figure~\ref{fig:example_cell}. The Betti numbers of $M_\a$ are: $\beta_0=10,\ \beta_1=15,\ \beta_2=6$. 

\begin{figure}[h]
    \centering
    \includegraphics[width=70mm]{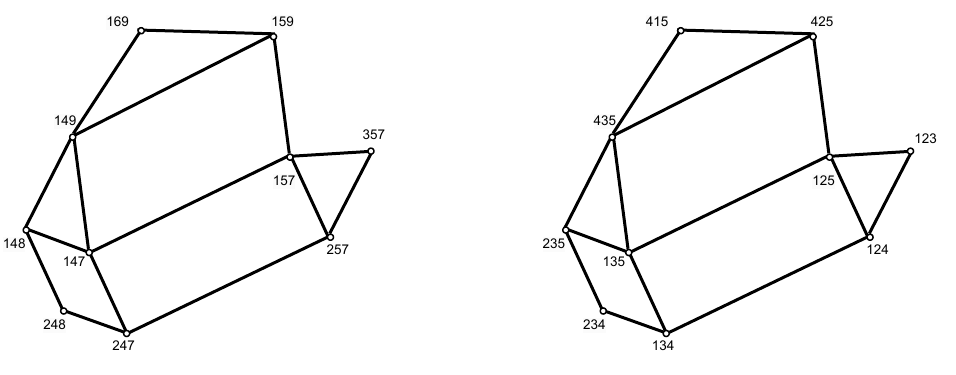}
   \caption{Polyhedral complex $X_{G_\a}$ with labelling of $M_\a$ (right) and labelling of $H_\a$ (left).}
    \label{fig:example_cell}
\end{figure}
\end{example}

\begin{credits}
\subsubsection{\ackname}
The author would like to thank Oliver Clarke and Janet Page for helpful discussions. The work was partially supported by the grants G0F5921N (Odysseus programme) and G023721N from the Research Foundation - Flanders (FWO), the UiT Aurora project MASCOT and the grant iBOF/23/064 from the KU Leuven.

\subsubsection{\discintname}
The authors have no competing interests to declare that are relevant to the content of this article. 
\end{credits}
%
%
%
%

\end{document}